\newtheorem{theorem}{Theorem}[section]
\newtheorem{corollary}[theorem]{Corollary}
\newtheorem{proposition}[theorem]{Proposition}
\newtheorem{definition}[theorem]{Definition}
\newtheorem*{TI*}{Topological Interpretation}
\theoremstyle{remark}
\newtheorem{remark}{Remark}[section]
\newtheorem*{rem*}{Remark}
\newcommand{\comment}[1]{}
\numberwithin{equation}{section}
\DeclareMathOperator{\Um}{Um}
\DeclareMathOperator{\fr}{first}
\DeclareMathOperator{\co}{column}
\DeclareMathOperator{\of}{of}
\begin{document}
\title{Algebraic cohomotopy groups,  algebraic fundamental groups and stably free modules}
\author{Raja Sridharan $^1$ and Sumit Kumar Upadhyay $^2$\vspace{.4cm}\\
{$^{1}$School Of Mathematics,\\ Tata Institute of fundamental Research,\\Colaba,  Mumbai, India \vspace{.3cm}\\ $^{2}$Department of Applied Sciences,\\ Indian Institute of Information Technology Allahabad, \\Prayagraj, U. P., India\\}}
\thanks{$^1$sraja@math.tifr.res.in, $^2$upadhyaysumit365@gmail.com}
\thanks {2020 Mathematics Subject classification : 13C10, 57M05, 55Q55}

\begin{abstract}
In this article, we prove the algebraic counterpart of the topological results $H^1(S^1, \mathbb{Z}) \cong \mathbb{Z}$ and $H^1(S^2, \mathbb{Z}) \cong \{0\}$. We also see that a non-trivial element of the algebraic cohomotopy groups of certain rings associated with some known topological spaces provides examples of non-free   stably free module of rank two over those rings.
\end{abstract}
\maketitle
\textbf{Keywords}:  Stably free module, Mayer Vietoris sequence, Covering space, Group, Unimodular row.
\section{Introduction}
A generalization of the concept of a vector space over a field is the concept of a module over a ring. Although every vector space has a basis, modules over rings do not in general.  Free modules are those that have a basis. A projective module is a direct summand of a free module. In fact,  the class of projective modules enlarges the class of free modules over a ring by keeping some of the main properties of free modules intact. In the realm of mathematics, the freeness of finitely generated projective modules over a ring therefore emerged as an interesting research problem. In 1955, Serre \cite{Serre} asked the question whether projective modules over polynomial rings over fields are free. Although Serre was not sure whether this question had an affirmative answer, the question became known to the mathematical world as ``Serre's Conjecture". Serre's Conjecture was motivated by the result that over  Euclidean space, topological vector bundles are trivial. In 1957/1958, Serre \cite{Serre1} made progress and proved that every finitely generated projective module over a polynomial ring over a field is stably free. In 1976, D. Quillen and A. Suslin (\cite{DQ, SU}) proved independently that Serre's conjecture is true.  It is well known that the study of stably free modules over a ring is equivalent to the study of unimodular rows over the ring. There are a lot of theories that have been developed by many mathematicians to decide whether projective modules corresponding to unimodular rows are free. 

 Now, suppose $A$ is the coordinate ring of a real affine variety $X$ with real points $X(\mathbb{R})$. Then an element of $A$ gives a continuous map from $X(\mathbb{R})$ to $\mathbb{R}$. An element $(a_1, a_2, \ldots, a_n)\in A^n$ is unimodular if there exist $b_1, b_2, \ldots, b_n\in A$ such that $a_1b_1+a_2b_2+\cdots a_nb_n =1$ which means that $a_1, a_2, \ldots, a_n$ have no common zero in $X(\mathbb{R})$. Thus, every unimodular row of length $n$ over $A$ gives a continuous map from $X(\mathbb{R})$ to $\mathbb{R}^n \setminus \{(0, 0, \cdots, 0)\}$. So, one can say that set of all unimodular rows over $A$ is the algebraic counterpart of the set of all continuous maps from $X(\mathbb{R})$ to $\mathbb{R}^n \setminus \{(0, 0, \cdots, 0)\}$ (for details see \cite{SSR}). Using this as motivation, one can ask for algebraic analogues of the groups $H^0(X, \mathbb{Z})$ and $H^1(X, \mathbb{Z})$ of a topological space $X$. In turns out, the correct algebraic analogues of these groups were given by Nori. Nori's definition was based on earlier work by Krusemeyer \cite{MK}.  In 2020, Raja Sridharan et. al. \cite{RSS} discussed these algebraic counterpart of the groups  $H^0(X, \mathbb{Z})$ and $H^1(X, \mathbb{Z})$ and we termed these groups as algebraic fundamental groups and algebraic cohomotopy groups, respectively. 
 
It is known that the Mayer Vietoris sequence (see \cite{CTC}) is one of the primary techniques used to determine the group $H^1(X, \mathbb{Z})$ of a topological space $X$. In \cite{RSS}, the authors proved an algebraic counterpart of the Mayer Vietoris sequence associated with the fibre product diagram $\begin{tikzcd}
A \arrow{r}{} \arrow[swap]{d}{} & A_a \arrow{d}{} \\
A_b\arrow{r}{} & A_{ab}
\end{tikzcd}$ where $A$ is ring and the ideal generated by $a$ and $b$ is $A$. This fibre product diagram corresponds to the fact that a topological space $X$ is the union of two open sets as a condition needed to obtain the topological Mayer Vietoris sequence. Numerous characteristics of algebraic cohomotopy groups and algebraic fundamental groups have been demonstrated in \cite{RSS1} and \cite{RSS2}, respectively. 

Now, we provide some heuristic motivation following the insights of Krusemeyer and Nori. 
Consider the field of complex numbers $\mathbb{C}$ and the exponential map $\exp : \mathbb{C} \rightarrow \mathbb{C}^*$, where $\mathbb{C}^* = \mathbb{C} \setminus \{0\}$. We then have the following short exact sequence of groups
$$0 \rightarrow \mathbb{Z} \rightarrow \mathbb{C} \overset{\exp}\rightarrow \mathbb{C}^* \rightarrow 0 \hspace{2cm} (1)$$
where one can consider $\mathbb{C}$ as universal covering space of $\mathbb{C}^*$. 


Let $X$ and $Y$ be two topological spaces. Consider the set $\text{Cont}(X, Y)$ of all continuous maps from $X$ to $Y$. It is dualizing the above sequence $(1)$, we have the following short exact sequence of sheaves

$$1 \rightarrow \text{Cont}(X, \mathbb{Z}) \rightarrow \text{Cont}(X, \mathbb{C}) \overset{\exp}\rightarrow \text{Cont}(X, \mathbb{C}^* )\rightarrow 1 \hspace{2cm} (2)$$

Note that  $\exp:\text{Cont}(X, \mathbb{C}) \rightarrow \text{Cont}(X, \mathbb{C}^* )$ need not be surjective since the identity map from $\mathbb{C}^*$ to $\mathbb{C}^*$ does not admit a global logarithm. However exponential map is locally surjective, yielding a exact sequence of sheaves.  

We now discuss the algebraisation of sequence (1) due to Krusemeyer.  To do this, one replaces $\mathbb{C}^*$ by $SL_2 (\mathbb{R})$ which is homotopy equivalent to $\mathbb{C}^*$.  Then we have the following exact sequence (for details see \cite{RSS2}) 

$$0 \rightarrow \pi_1(SL_2(\mathbb{R}), I_2) \rightarrow E \overset{p}\rightarrow SL_2 (\mathbb{R}) \rightarrow 0\hspace{2cm}  (2)$$
 where  $E = \{[\alpha(T)] \mid  \alpha(T) \in SL_2(\mathbb{R}[T]) ~\text{with}~ \alpha(0) = I_2 \}$ is the path space and $[\alpha(T)]$ denotes equivalence  class of $\alpha(T)$ (we say that  $\alpha(T) \sim \beta(T)$ if $\alpha(1) = \beta(1)$ and there exists $\gamma(T, S) \in SL_2 (\mathbb{R}[T, S])$ such that 
\begin{center}
$\gamma(T, 0) = \alpha(T)$, $\gamma(T, 1) = \beta(T)$\\
$\gamma(0, S) = I_2$, $\gamma(1, S) = \alpha(1) =\beta(1)$)
\end{center}
 In this case, $E$ is the algebraic universal covering space of $SL_2(\mathbb{R})$ and $p$ is the map sending $[\alpha(T)]$ to $\alpha(1)$.
 
 To dualize the above sequence note that if $X$ is a topological space, then $\text{Cont}(X, SL_2(\mathbb{R}))$ is equal to $SL_2(A)$, where $A = \text{Cont}(X, \mathbb{R})$. Krusemeyer dualizes sequence $(2)$ to obtain the following exact sequence of sheaves
 
 $$1 \rightarrow \pi_1(SL_2(A, I_2)) \rightarrow G'(A) \overset{p}\rightarrow SL_2 (A) \rightarrow 1\hspace{2cm}  (3)$$. We note that $p$ is not necessarily surjective. In sequence $(3)$, $p$ is the map sending $[\alpha(T)]$ to $\alpha(1)$ and   $G'(A) = \{[\alpha(T)] \mid  \alpha(T) \in SL_2(A[T]) ~\text{with}~ \alpha(0) = I_2 \}$ where $[\alpha(T)]$ denotes the equivalence class of $\alpha(T) \in SL_2(A[T])$ with $\alpha(0)= I_2$ and the equivalence relation $\sim$ is defined as follows:

$\alpha(T) \sim \beta(T)$ if $\alpha(1) = \beta(1)$ and there exists $\gamma(T, S) \in SL_2 (A[T, S])$ such that 
\begin{center}
$\gamma(T, 0) = \alpha(T)$, $\gamma(T, 1) = \beta(T)$\\
$\gamma(0, S) = I_2$, $\gamma(1, S) = \alpha(1) =\beta(1)$.
\end{center} 

Note that we have following long exact cohomology sequence associated to the exponential sequence $(2)$
 
 $0 \to H^0(X, \mathbb{Z})\to H^0(X, \mathbb{C}) \to H^0(X, \mathbb{C}^*)\to H^1(X, \mathbb{Z})\to H^1(X, \mathbb{C}) \to H^1(X, \mathbb{C}^*)$ $\to H^2(X, \mathbb{Z})\to H^2(X, \mathbb{C}) \to H^2(X, \mathbb{C}^*)\to \cdots$
 
Nori's idea is that the algebraic analog of the above topological cohomology sequence should be a long exact C\^ech cohomology sequence associated with a short exact sequence of sheaves $(3)$, where the topology on $\text{Spec} A$ is the Zariski topology. We shall consider some particular consequences of Nori's algebraisation. We would be grateful for any references to the literature where similar ideas occur.
 
\begin{enumerate}
\item The group $\pi_1(SL_2(A))$ is the algebraic analogue of $H^0(X, \mathbb{Z})$ and is related to connectedness (see \cite{RSS2}).
\item Since $H^1(X, \mathbb{C})$ is zero, it follows that $H^1(X, \mathbb{Z})$ is equal to $\text{Cont}(X, \mathbb{C}^*)$ modulo image of exponential of $\text{Cont}(X, \mathbb{C})$ that is those elements of $\text{Cont}(X, \mathbb{C}^*)$ which are homotopic to a constant. Similarly, $H^1(X, \mathbb{Z})$ should be similar to $\text{Cont}(X, SL_2(\mathbb{R}))$ modulo those maps which are homotopic to constant that is, the algebraic analogue of $H^1(X, \mathbb{Z})$ is the group $\Gamma (A) = SL_2(A)/QL_2(A)$ (see \cite{RSS1}), where $QL_2(A)  = \{\alpha \in SL_2(A)\mid \alpha ~\text{is connected to the identity}\}$ (\cite{SM}). This group is generalization of the group $SL_n(A)/E_n(A)$ ($n\geq 3$) which has been considered in the literature. 
\item If $X = U \cup V$ where $U$ and $V$ are open contractible sets, then $H^1(X, \mathbb{Z})$ is equal to $\text{Cont}(X, \mathbb{Z})$ modulo those continuous functions which can be written as $\beta - \gamma$ where $\beta: U \to \mathbb{Z}$ and $\gamma: V \to \mathbb{Z}$ are continuous.

Suppose $A$ is ring and $U = D(f)$ and $V = D(g)$, where $\langle f, g\rangle = A$. Then using (1), we see that $H^1(X, \mathbb{Z})$ is similar to group $\frac{\pi_1(SL_2(A_{fg})}{\pi_1(SL_2(A_{f}))\times \pi_1(SL_2(A_{g}))}.$ The algebraic analogue of Mayer-Vietoris sequence for two open sets is studied in \cite{RSS}.
\end{enumerate}
In this paper, we demonstrate a certain algebraic generalization of Mayer-Vietoris sequence for closed sets (see \cite{CTC}). This sequence allows us to verify the algebraic counterparts of the facts that $H^1(S^1, \mathbb{Z}) \cong \mathbb{Z}$ and $H^1(S^2, \mathbb{Z}) \cong \{0\}$. As an application, we also see that a non-trivial element of the algebraic cohomotopy groups of certain rings associated with some known topological spaces provides examples of  non-free stably free modules of rank two over those rings. 

Throughout the paper, we say a unimodular row $(a_1, a_2, \ldots, a_n) \in A^n$ is completable if there exists a matrix $\alpha\in SL_n(A)$ having $(a_1, a_2, \ldots, a_n)$ as  the first column. Now, we recall known definitions of algebraic fundamental groups and algebraic cohomotopy groups (see for example \cite{RSS}). 
\begin{definition}
Let $A$ be a ring and $L$ be the set of all loops in $SL_{2}(A)$ starting and ending at the identity matrix $I_2$, that is, $L =\{\alpha(T)\in SL_{2}(A[T])\mid \alpha(0) = \alpha(1) = I_2\}$. We say that two loops $\alpha(T), \beta(T) \in L$ are equivalent (that is, written as $\alpha(T)\sim \beta(T)$) if there exists $\gamma(T, S)\in SL_{2}(A[T, S])$ such that $\gamma(T, 0) = \alpha(T), \gamma(T, 1) = \beta(T)$ and $\gamma(0,S) = \gamma(1, S) = I_2$. We call $\gamma(T, S)$ as a homotopy between $\alpha(T)$ and $\beta(T)$.

Let $\pi_1 (SL_{2}(A))$ be the set of all equivalence classes of loops based on $I_2$. It forms an abelian group with respect to  the binary operation `$*$' defined by $[\alpha(T)]*[\beta(T)] = [\alpha(T)\beta(T)]$. We call $\pi_1 (SL_{2}(A))$ the algebraic fundamental group of $A$. 
\end{definition}

\begin{definition}
We say that two unimodular rows $(a, b),~ (c, d)$ over $A$ are equivalent, written as $(a, b)\sim (c, d)$, if one (and hence both) of the following equivalent conditions holds:
\begin{enumerate}
\item there exists $(f_{1} (T), f_{2}(T)) \in \Um_2 (A[T])$ such that $(f_{1} (0), f_{2}(0)) = (a, b)$ and $(f_{1} (1), f_{2}(1)) = (c, d)$.
\item there exists a matrix $\alpha \in SL_2 (A)$ which is connected to the identity matrix (that is, there exists a matrix $\beta(T) \in SL_2 (A[T])$ such that $\beta(0) = I_2$ and $\beta(1) = \alpha$) such that
$\alpha \begin{pmatrix}
a \\ b
\end{pmatrix} = \begin{pmatrix}
c \\ d
\end{pmatrix}$.
\end{enumerate}

It is not hard to check that the relation $\sim$ is an equivalence relation. We denote  the equivalence class of $(a,b)$ by $[a,b]$. Let $\Gamma(A)$ be the set of all equivalence classes of unimodular rows given by the equivalence relation $\sim$ as above. Define a product $*$ in $\Gamma(A)$ as follows:

Let $(a, b),~ (c, d) \in \Um_2 (A)$. Suppose $\sigma = \begin{pmatrix}
a & e \\ b & f
\end{pmatrix} $ and $\tau = \begin{pmatrix}
c & g \\ d & h
\end{pmatrix}$ are completions of $(a, b)$ and $(c, d)$ in $SL_2(A)$, respectively. We define the product of two elements $[a,b], [c,d]\in \Gamma(A)$ as follows: $$[a, b] * [c, d] = [\fr\;\co\;\of~ \sigma\tau] = [ac+de, bc+df].$$

Then $(\Gamma(A), *)$ is a group and the identity element of the group $\Gamma(A)$ is $[1, 0]$ (for details see \cite{RSS}). We called it as the algebraic cohomotopy group of $A$. 
\end{definition}
\begin{remark}
\begin{enumerate}
\item Let $f: A \to B$ be a ring homomorphism. Then we have a group homomorphism $\Gamma(f): \Gamma(A) \to \Gamma(B)$ defined as $\Gamma(f)([a, b])= [f(a), f(b)]$.
\item In fact, $\Gamma(A) = \frac{SL_2(A)}{QL_2(A)}$ (for $QL_2(A)$, one may see \cite{SM}). In this case, the identity element of the group $\Gamma(A)$ is the identity matrix $I_2.$
\end{enumerate}

\end{remark}

\section{\textbf{An additional algebraic counterpart of the Mayer-Vietoris sequence}}
Let $A$ be a commutative ring with identity. Consider the pull back of the diagram 
\begin{equation}\label{1}
\begin{tikzcd}
B \arrow{r}{i_1} \arrow[swap]{d}{i_2} & A[X] \arrow{d}{\delta} \\
A\arrow{r}{\Delta} & A\oplus A
\end{tikzcd}
\end{equation} where  $\Delta: A \to A\oplus A$ and $\delta: A[X] \to A\oplus A$ are the ring homomorphisms defined as $\Delta(a)= (a, a)$, and  $\delta(f(X))= (f(0), f(1))$, respectively. Clearly, $B = \{(f(X), a) \mid \delta(f(X)) = \Delta(a)\} = \{(f(X), a) \mid f(0) = f(1) = a\}$. Hence, $SL_2(B) = \{(\alpha(X), \beta)\in SL_2(A[X])\times SL_2(A) \mid \alpha(0) = \alpha(1) = \beta\}$.

\begin{theorem}\label{direct sum}
There exists a group homomorphism from $\pi_1(SL_2(A\oplus A))$ to $\Gamma(B)$ with the kernel $\pi_1(SL_2(A))$.
\end{theorem}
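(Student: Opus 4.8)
The plan is to realize the asserted map as the connecting homomorphism of the Mayer--Vietoris sequence attached to the fibre square \eqref{1}. Concretely, I would first note that since $A\oplus A$ is the product ring we have $SL_2((A\oplus A)[T]) = SL_2(A[T])\times SL_2(A[T])$ and all homotopies split componentwise, so there is a canonical identification $\pi_1(SL_2(A\oplus A))\cong \pi_1(SL_2(A))\times\pi_1(SL_2(A))$; I shall write a general element as $([\theta_1(T)],[\theta_2(T)])$ with $\theta_i(0)=\theta_i(1)=I_2$. I would then define
$$\Phi\big([\theta_1(T)],[\theta_2(T)]\big)=\big[\theta_1(X)\theta_2(X)^{-1}\big],$$
where on the right one reparametrises $T\rightsquigarrow X$ and reads the matrix in $SL_2(A[X])$. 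Because $\theta_1(0)\theta_2(0)^{-1}=\theta_1(1)\theta_2(1)^{-1}=I_2$, the matrix $\theta_1(X)\theta_2(X)^{-1}$ lies in $SL_2(B)$ (with associated $\beta=I_2$), and $[\,\cdot\,]$ denotes its class in $\Gamma(B)=SL_2(B)/QL_2(B)$.

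Next I would check that $\Phi$ is well defined and a homomorphism. For well-definedness, a based homotopy $\gamma(T,S)$ from $\theta_1$ to $\theta_1'$ yields, after reparametrising and right-multiplying by $\theta_2(X)^{-1}$, a family $\gamma(X,S)\theta_2(X)^{-1}$ lying in $SL_2(B)$ for every $S$ (its values at $X=0,1$ are $I_2$), hence a path in $SL_2(B)$ connecting the two candidate images; the same argument applies to the second slot. For the homomorphism property, unwinding the products reduces the claim to showing that $[gh]=[hg]$ in $\Gamma(B)$ whenever $g,h$ are reparametrised based loops, i.e.\ that the commutator $ghg^{-1}h^{-1}$ lies in $QL_2(B)$. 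Here I would invoke the fact, recorded in the definition, that $\pi_1(SL_2(A))$ is abelian: this supplies a based homotopy $gh\sim hg$, and such a homotopy has constant value $I_2$ at $X=0,1$, so it is precisely a path inside $SL_2(B)$ from $gh$ to $hg$, forcing $[gh]=[hg]$.

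Finally I would compute the kernel and identify it with $\pi_1(SL_2(A))$ embedded diagonally. The inclusion $\{([\psi],[\psi])\}\subseteq\ker\Phi$ is immediate, since $\psi(X)\psi(X)^{-1}=I_2$. For the reverse inclusion, suppose $\theta_1(X)\theta_2(X)^{-1}\in QL_2(B)$, witnessed by $\gamma(X,S)\in SL_2(A[X,S])$ with $\gamma(X,0)=I_2$, $\gamma(X,1)=\theta_1(X)\theta_2(X)^{-1}$ and $\gamma(0,S)=\gamma(1,S)$ for all $S$ (this last condition being exactly membership in $SL_2(B)$ at each level $S$). The base point is allowed to move along $\beta(S):=\gamma(0,S)=\gamma(1,S)$, which is itself a based loop since $\beta(0)=\beta(1)=I_2$. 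The key manoeuvre is to replace $\gamma$ by $\tilde\gamma(X,S):=\beta(S)^{-1}\gamma(X,S)$: one checks that $\tilde\gamma(0,S)=\tilde\gamma(1,S)=I_2$, $\tilde\gamma(X,0)=I_2$ and $\tilde\gamma(X,1)=\theta_1(X)\theta_2(X)^{-1}$, so $\tilde\gamma$ is a genuine \emph{based} homotopy exhibiting $\theta_1\theta_2^{-1}\sim I_2$ in the sense of $\pi_1$. Hence $[\theta_1]=[\theta_2]$, and $\ker\Phi$ is the diagonal copy of $\pi_1(SL_2(A))$.

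I expect the main obstacle to be this last converse inclusion: the hypothesis only supplies a \emph{free} homotopy (a path in $SL_2(B)$, along which the common endpoint value $\beta(S)$ need not be $I_2$), whereas the definition of $\pi_1(SL_2(A))$ demands a \emph{based} homotopy. The correction $\gamma\mapsto\beta(S)^{-1}\gamma$ is what rigidifies the free homotopy into a based one; getting this bookkeeping right, together with the abelianness input in the homomorphism step, is the crux, while the remaining verifications are routine manipulations of polynomial matrices.
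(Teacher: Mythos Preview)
Your argument is correct and establishes the theorem, but it differs from the paper's in two concrete respects. First, the paper does not use your formula $\Phi([\theta_1],[\theta_2])=[\theta_1(X)\theta_2(X)^{-1}]$; instead it introduces the two--variable matrix $M_\alpha^\beta(X,T)=\alpha((1-X)T)\,\beta(XT)$ and sends $([\alpha],[\beta])$ to $[M_\alpha^\beta(X,1)]=[\alpha(1-X)\beta(X)]$. These two maps are not literally equal (one is essentially the inverse of the other in $\Gamma(B)$), but both are homomorphisms with diagonal kernel, so either suffices. Second, and more interestingly, the homomorphism verification is handled quite differently: you reduce to $[gh]=[hg]$ for based loops $g,h$ and invoke the abelianness of $\pi_1(SL_2(A))$, whereas the paper never needs that fact explicitly. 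Because the paper carries the extra variable $T$, it can simply set $M(X,T)=M_\alpha^\beta(X,T)\,M_{\alpha'}^{\beta'}(X,T)$, observe that this is a legitimate choice of $M_{\alpha\alpha'}^{\beta\beta'}(X,T)$, and cite the already--proved independence of choice; multiplicativity then drops out without any commutativity input. The payoff of the paper's route is that the auxiliary parameter $T$ packages well--definedness and multiplicativity into a single ``independence of $M$'' lemma, which also feeds directly into the Mayer--Vietoris sequence of Theorem~\ref{fiber direct}; the payoff of your route is a simpler closed formula for the map, at the cost of importing the abelianness of $\pi_1(SL_2(A))$ as an external ingredient. Your kernel computation---correcting a free homotopy in $SL_2(B)$ to a based one via left multiplication by $\beta(S)^{-1}$---is exactly the manoeuvre the paper uses (implicitly, when it asserts the connecting path may be taken with second component $I_2$), so on that point the two proofs coincide.
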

\begin{proof}
Let $([\alpha(T)], [\beta(T)])\in \pi_1(SL_2(A\oplus A))$ and  $M_{\alpha}^{\beta}(X, T) = \alpha((1-X)T)\beta(XT) \in SL_2(A[X, T]$. Then, $$M_{\alpha}^{\beta}(0, T) = \alpha(T), M_{\alpha}^{\beta}(1, T) = \beta(T)$$
$$M_{\alpha}^{\beta}(X, 0) = I_2, M_{\alpha}^{\beta}(0, 1) =M_{\alpha}^{\beta}(1, 1) = I_2.$$
Therefore, $(M_{\alpha}^{\beta}(X, 1),  I_2) \in \Gamma(B)$. Thus, we can get a map $\chi: \pi_1(SL_2(A\oplus A)) \to \Gamma(B)$ defined by $$\chi([\alpha(T)], [\beta(T)]) = (M_{\alpha}^{\beta}(X, 1),  I_2).$$ 

\noindent\textbf{Claim 1: $\chi$ is independent on the choice of  $M_{\alpha}^{\beta}(X, T)$ and is well defined.} 

Suppose $M_{\alpha}^{\beta}(X, T), M_{\alpha}^{\beta}(X, T)'\in SL_2(A[X, T]$ such that $$M_{\alpha}^{\beta}(0, T) =M_{\alpha}^{\beta}(0, T)' = \alpha(T), M_{\alpha}^{\beta}(1, T) =M_{\alpha}^{\beta}(1, T)' = \beta(T)$$
$$M_{\alpha}^{\beta}(X, 0) =M_{\alpha}^{\beta}(X, 0)' = I_2 , M_{\alpha}^{\beta}(0, 1) = M_{\alpha}^{\beta}(1, 1) =M_{\alpha}^{\beta}(0, 1)' =M_{\alpha}^{\beta}(1, 1)' = I_2.$$
Now, consider $\theta(X, T) = M_{\alpha}^{\beta}(X, T)' {M_{\alpha}^{\beta}(X, T)}^{-1}$. Then $(\theta(X, T), I_2) \in SL_2(B[T])$ (as $\theta(0, T) =  \theta(1, T) = I_2$). Also, $$(\theta(X, 0), I_2)  = (I_2, I_2)$$ and $$(\theta(X, 1), I_2) (M_{\alpha}^{\beta}(X, 1),  I_2) = (M_{\alpha}^{\beta}(X, 1)',  I_2).$$ This shows that $\chi$ is independent on the choice of  $M_{\alpha}^{\beta}(X, T)$.

Suppose $([\alpha(T)], [\beta(T)]) = ([\alpha'(T)], [\beta'(T)])\in \pi_1(SL_2(A\oplus A))$. Then there exists $(\gamma_1(T, S), \gamma_2(T, S)) \in SL_2((A\oplus A)[T, S])$ such that 
$$(\gamma_1(T, 0), \gamma_2(T, 0)) = (\alpha(T), \beta(T)), (\gamma_1(T, 1), \gamma_2(T, 1)) = (\alpha'(T), \beta'(T))$$ and 
$$(\gamma_1(0, S), \gamma_2(0, S)) = (\gamma_1(1, S), \gamma_2(1, S)) = (I_2, I_2).$$

Consider, $M(X, T, S) = \gamma_1((1-X)T, S) \gamma_2(XT, S)$. Then it is easy to see that $\chi([\alpha(T)], [\beta(T)]) = (M(X, 1, 0),  I_2)$ and $\chi([\alpha'(T)], [\beta'(T)]) = (M(X, 1, 1),  I_2)$. 

Take $\theta(X, S) = M(X, 1, S)M(X, 1, 0)^{-1}$. Then 
$$(\theta(0, S), I_2) = (\theta(1, S), I_2) = (I_2, I_2)$$
$$(\theta(X, 0), I_2) = (I_2, I_2) ~\text{and}~ (\theta(X, 1), I_2)(M(X, 1, 0),  I_2) =(M(X, 1, 1),  I_2).$$ This shows that $(M(X, 1, 0),  I_2) =(M(X, 1, 1),  I_2)$ in $\Gamma(B)$, that is, $\chi$ is well defined. 

\noindent\textbf{Claim 2: $\chi$ is a group homomorphism.} 

Suppose $\chi([\alpha(T)], [\beta(T)]) = (M_{\alpha}^{\beta}(X, 1),  I_2)$ and $\chi([\alpha'(T)], [\beta'(T)]) = (M_{\alpha'}^{\beta'}(X, 1),  I_2)$

Consider $M(X, T) = M_{\alpha}^{\beta}(X, T)M_{\alpha'}^{\beta'}(X, T)$. Then we have $$M(0, T) = \alpha(T)\alpha'(T), M(1, T) = \beta(T)\beta'(T)$$
$$M(X, 0) = I_2, M(0, 1) =M(1, 1) = I_2.$$ Hence, $\chi([\alpha(T)\alpha'(T)], [\beta(T)\beta'(T)]) = (M(X, 1),  I_2)= (M_{\alpha}^{\beta}(X, 1),  I_2)(M_{\alpha'}^{\beta'}(X, 1),  I_2)$. Thus
$\chi([\alpha(T)], [\beta(T)])\chi([\alpha'(T)], [\beta'(T)]) = \chi\big(([\alpha(T)], [\beta(T)])([\alpha'(T)], [\beta'(T)])\big)$.

\noindent\textbf{Claim 3: $\ker(\chi) \cong \pi_1(SL_2(A))$.} 

To prove the claim, it is enough to show that $\ker(\chi) =  \{([\alpha(T)], [\alpha(T)]) ~\mid~ \alpha(T) \in \pi_1(SL_2(A))\}$.

It is clear that $([\alpha(T)], [\alpha(T)]) \in \ker(\chi)$. Conversely, suppose that $([\alpha(T)], [\beta(T)])\in \ker(\chi)$. Then $(M_{\alpha}^{\beta}(X, 1),  I_2) = (I_2, I_2)$ in $\Gamma(B)$, where $M_{\alpha}^{\beta}(0, T) = \alpha(T), M_{\alpha}^{\beta}(1, T) = \beta(T),$ $M_{\alpha}^{\beta}(X, 0) = I_2, M_{\alpha}^{\beta}(0, 1) =M_{\alpha}^{\beta}(1, 1) = I_2.$

Since $(M_{\alpha}^{\beta}(X, 1),  I_2) = (I_2, I_2)$ in $\Gamma(B)$, there exists $(\theta(X, T), I_2)\in SL_2(B[X, T])$ such that $$(\theta(X, 0), I_2)= (I_2, I_2)$$ and $$(\theta(X, 1), I_2)(M_{\alpha}^{\beta}(X, 1),  I_2) = (I_2, I_2).$$ Since $(\theta(X, T), I_2)\in SL_2(B[X, T])$, $\theta(0, T)= \theta(1, T)= I_2$. 

Take $\gamma(X, T) = \theta(X, T)M_{\alpha}^{\beta}(X, T)$. Then 
$\gamma(0, T) = \alpha(T), \gamma(1, T) = \beta(T)$ and $\gamma(X, 0)=  \gamma(X, 1) = I_2$. Thus $[\alpha(T)] =  [\beta(T)]$. Therefore, $\ker(\chi) \cong \pi_1(SL_2(A))$.
\end{proof}

\begin{corollary}\label{trivial}
Let $A$ be a ring such that $QL_2(A) = SL_2(A)$, that is, $\Gamma(A)$ is trivial. Then $\chi$ is surjective. Moreover, $\pi_1(SL_2(A))\cong \Gamma(B).$
\end{corollary}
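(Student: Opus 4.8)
The plan is to prove the two assertions in turn, observing that once surjectivity of $\chi$ is in hand, the isomorphism $\pi_1(SL_2(A)) \cong \Gamma(B)$ is a formal consequence of Theorem~\ref{direct sum}. So the only real content lies in surjectivity, and the role of the hypothesis $QL_2(A) = SL_2(A)$ is precisely to trivialize the second coordinate of a general element of $SL_2(B)$.

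For surjectivity, I would begin with an arbitrary class in $\Gamma(B)$, represented by some $(\alpha(X), C) \in SL_2(B)$, so that $\alpha(X) \in SL_2(A[X])$ with $\alpha(0) = \alpha(1) = C$. The key step is to kill the matrix $C$ using the triviality of $\Gamma(A)$: since $C$ is connected to $I_2$, there is a path $c(T) \in SL_2(A[T])$ with $c(0) = I_2$ and $c(1) = C$. I would then observe that the pair $(C, C)$, constant in $X$, lies in $QL_2(B)$, because the path $(c(T), c(T))$ lies in $SL_2(B[T])$ (its specializations at $X = 0$ and $X = 1$ agree, being constant in $X$) and connects $(I_2, I_2)$ to $(C, C)$. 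Writing $(\alpha(X), C) = (\alpha(X)C^{-1}, I_2)(C, C)$ then shows that $(\alpha(X), C)$ and $(\alpha(X)C^{-1}, I_2)$ represent the same element of $\Gamma(B)$. Setting $N(X) = \alpha(X)C^{-1}$ we have $N(0) = N(1) = I_2$, so $N$ is a loop based at $I_2$. It remains to place $(N(X), I_2)$ in the image of $\chi$, and for this I would simply take the constant loop $\alpha(T) = I_2$ together with $\beta(T) = N(T)$; then $M_{\alpha}^{\beta}(X, T) = N(XT)$, so $M_{\alpha}^{\beta}(X, 1) = N(X)$ and hence $\chi([I_2], [N]) = (N(X), I_2)$. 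This exhibits every class of $\Gamma(B)$ in the image, proving surjectivity.

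For the isomorphism, I would use that $A \oplus A$ is a product ring, so that $SL_2(A\oplus A) = SL_2(A) \times SL_2(A)$ and likewise after base change to $A[T]$ and $A[T,S]$; consequently $\pi_1(SL_2(A\oplus A)) \cong \pi_1(SL_2(A)) \times \pi_1(SL_2(A))$, with loops and homotopies taken coordinatewise. By Theorem~\ref{direct sum} the kernel of $\chi$ is the diagonal copy of $\pi_1(SL_2(A))$ inside this product. Writing $G = \pi_1(SL_2(A))$, which is abelian, the first isomorphism theorem gives $\Gamma(B) \cong (G \times G)/D$, where $D = \{(g,g)\mid g \in G\}$ is the diagonal subgroup; the homomorphism $(g,h) \mapsto gh^{-1}$ has kernel exactly $D$ and is surjective, so it induces an isomorphism $(G\times G)/D \cong G$. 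Combining these, $\Gamma(B) \cong \pi_1(SL_2(A))$.

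The main obstacle, and indeed the only non-formal point, is the reduction in the surjectivity step, namely the verification that $(C, C) \in QL_2(B)$. This is exactly where the hypothesis $\Gamma(A) = \{0\}$ enters: without a path from $I_2$ to $C$ in $SL_2(A)$ there is no way to trivialize the second coordinate of $(\alpha(X), C)$, and the image of $\chi$ consists only of classes whose representatives carry $I_2$ in the second slot. Everything else is a routine manipulation of loops and the standard quotient of a product of an abelian group by its diagonal.
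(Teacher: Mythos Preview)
Your argument is correct and follows the same overall shape as the paper's proof: first reduce to representatives of the form $(\alpha(X), I_2)$ using a path to the second coordinate, then exhibit such a class in the image of $\chi$, and finally quotient $G\times G$ by its diagonal. The one genuine difference is in the second step. The paper, having reduced to $(\alpha(X), I_2)$ with $\alpha(0)=\alpha(1)=I_2$, invokes the auxiliary fact that $\Gamma(A)$ trivial implies $\Gamma(A[X])$ trivial to produce a homotopy $M(X,T)\in SL_2(A[X,T])$ with $M(X,0)=I_2$ and $M(X,1)=\alpha(X)$, and then takes $([M(0,T)],[M(1,T)])$ as the preimage. You instead observe directly that $N(X)=\alpha(X)C^{-1}$ is already a loop and feed $([I_2],[N])$ into $\chi$, using the explicit formula $M_{I_2}^{N}(X,T)=N(XT)$. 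Your route is slightly more elementary, since it sidesteps the homotopy invariance $\Gamma(A)\cong\Gamma(A[X])$; the paper's route, on the other hand, yields a preimage whose components are genuine endpoint loops of a two-variable homotopy, which is closer in spirit to the general Mayer--Vietoris argument in Theorem~\ref{fiber direct}.
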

\begin{proof}
Let $(\alpha(X), \beta)\in \Gamma(B)$. Then $\beta \in SL_2(A)=QL_2(A)$. So, there exists a $\sigma(T) \in SL_2(A[T])$ such that $\sigma(0) = I_2$ and $\sigma(1) = \beta$. Consider $(\sigma(T)^{-1}, \sigma(T)^{-1})$. Clearly, $(\sigma(T)^{-1}, \sigma(T)^{-1}) \in SL_2(B[T])$. Also $(\sigma(0)^{-1}, \sigma(0)^{-1}) = (I_2, I_2)$  and $(\sigma(1)^{-1},\\ \sigma(1)^{-1})(\alpha(X), \beta) = (\beta^{-1}\alpha(X), I_2)$. This shows that $(\alpha(X), \beta) = (\beta^{-1}\alpha(X), I_2)$ in $\Gamma(B)$.

Let $(\alpha(X), I_2)\in \Gamma(B)$. Then $\alpha(X) \in \Gamma(A[X])$. Since $\Gamma(A) = I_2$, $\Gamma(A[X]) = I_2$. There exists a $M(X, T)\in SL_2(A[X, T])$ such that $M(X, 0) = I_2$  and $M(X, 1) = \alpha(X)$. Clearly, $\chi([M(0, T)], [M(1, T)]) = (\alpha(X), I_2)$. This shows that $\chi$ is surjective and hence, $\frac{\pi_1(SL_2(A\oplus A))}{\pi_1(SL_2(A))}\cong \Gamma(B).$ This implies that $\pi_1(SL_2(A))\cong \Gamma(B).$
\end{proof}

The following theorem is an version of Mayer Vietoris sequence.
\begin{theorem}\label{fiber direct} For the fibre product diagram $\begin{tikzcd}
B \arrow{r}{} \arrow[swap]{d}{} & A[X] \arrow{d}{\delta} \\
A\arrow{r}{\Delta} & A\oplus A
\end{tikzcd}$, we have the following exact sequence \\$\pi_1(SL_2(B)) \overset{\Psi_1}\to\pi_1(SL_2(A[X]))\oplus \pi_1(SL_2(A)) \overset{\Psi_2}\to\pi_1(SL_2(A\oplus A)) \overset{\chi}\to \Gamma(B)\overset{\Phi_1}\to \Gamma(A[X])\oplus \Gamma(A)\overset{\Phi_2}\to \Gamma(A\oplus A)$, where $$\Psi_1([\alpha(X)(T), \beta(T)])=([\alpha(X)(T)], [\beta(T)]),$$ $$\Psi_2([\alpha(X)(T)], [\beta(T)])=([\alpha(0)(T)\beta(T)^{-1}], [\alpha(1)(T)\beta(T)^{-1}]),$$ $$\Phi_1([\alpha(X), \beta])=([\alpha(X)], [\beta]),$$ $$\Phi_2([\alpha(X)], [\beta])=([\alpha(0)\beta^{-1}], [\alpha(1)\beta^{-1}]).$$
\end{theorem}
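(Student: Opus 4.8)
The plan is to establish exactness at each of the four middle terms of the sequence by verifying the two standard inclusions ($\operatorname{im} \subseteq \ker$ and $\ker \subseteq \operatorname{im}$) at each spot. The maps $\Psi_1, \Psi_2, \Phi_1, \Phi_2$ are all induced by the ring homomorphisms in the fibre product diagram (restriction/inclusion into the two factors, followed by taking the difference of the two images via a quotient of matrices), so the composition of consecutive maps being trivial should in each case reduce to an algebraic identity that one can write down explicitly. I would first verify $\operatorname{im} \subseteq \ker$ at all four terms, since these are the routine direction: for instance, $\Psi_2 \circ \Psi_1$ sends $[\alpha(X)(T), \beta(T)]$ to $([\alpha(0)(T)\beta(T)^{-1}], [\alpha(1)(T)\beta(T)^{-1}])$, and because $(\alpha(X)(T), \beta(T)) \in SL_2(B[T])$ forces $\alpha(0)(T) = \alpha(1)(T) = \beta(T)$, both entries collapse to $[\beta(T)\beta(T)^{-1}] = [I_2]$. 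The analogous collapse for $\Phi_2 \circ \Phi_1$ uses the defining relation of $B$ at the level of $SL_2$.

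The substantive content lies in exactness at the two terms adjacent to $\chi$, and here I would lean on Theorem~\ref{direct sum}. Exactness at $\pi_1(SL_2(A\oplus A))$ asserts $\operatorname{im}(\Psi_2) = \ker(\chi)$. From Theorem~\ref{direct sum} we already know $\ker(\chi) = \{([\alpha(T)], [\alpha(T)]) \mid \alpha(T) \in \pi_1(SL_2(A))\}$, so it suffices to show the image of $\Psi_2$ is exactly the diagonal classes. Given $([\alpha(X)(T)], [\beta(T)])$, its image $([\alpha(0)(T)\beta(T)^{-1}], [\alpha(1)(T)\beta(T)^{-1}])$ lands in $\ker(\chi)$ precisely when the two components agree as classes; conversely, given a diagonal element $([\gamma(T)], [\gamma(T)])$, I would produce a preimage by choosing $\beta(T)$ suitably (e.g. $\beta = I_2$) and exhibiting an $\alpha(X)(T)$ with $\alpha(0)(T)$ and $\alpha(1)(T)$ both equivalent to $\gamma(T)$. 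Exactness at $\Gamma(B)$, namely $\operatorname{im}(\chi) = \ker(\Phi_1)$, is the dual of this: an element $(\alpha(X), \beta) \in \Gamma(B)$ lies in $\ker(\Phi_1)$ iff $\alpha(X)$ is trivial in $\Gamma(A[X])$ and $\beta$ is trivial in $\Gamma(A)$, i.e.\ both are connected to the identity, and I would use these connecting homotopies to lift the element back along $\chi$ to a pair of loops in $\pi_1(SL_2(A\oplus A))$, exactly as in the surjectivity argument of Corollary~\ref{trivial}.

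I expect the main obstacle to be the $\ker \subseteq \operatorname{im}$ direction at $\Gamma(B)$: unwinding what it means for $(\alpha(X), \beta)$ to be simultaneously null-homotopic in both $\Gamma(A[X])$ and $\Gamma(A)$ and then assembling these two independent homotopies into a single element $M_{\alpha}^{\beta}(X, T)$ of the form used to define $\chi$ requires care, because one must reconcile the homotopy in the $A[X]$-variable with the homotopy in the $A$-variable along the gluing locus $X = 0, 1$ without destroying the loop condition $M(X, 0) = M(0, 1) = M(1, 1) = I_2$. The technique should mirror the construction $M_{\alpha}^{\beta}(X, T) = \alpha((1-X)T)\beta(XT)$ from Theorem~\ref{direct sum}: one first normalizes $\beta$ to $I_2$ using a homotopy in $SL_2(B[T])$ (as in Corollary~\ref{trivial}), reducing to the case $(\alpha(X), I_2)$ with $\alpha(X) \in \Gamma(A[X])$ trivial, and then the null-homotopy of $\alpha(X)$ directly furnishes the required preimage. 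The remaining exactness verifications at $\pi_1(SL_2(A[X]))\oplus \pi_1(SL_2(A))$ and at $\Gamma(A[X])\oplus \Gamma(A)$ are formal diagram-chasing arguments using the fibre product structure, and I would dispatch them after the central portion is settled.
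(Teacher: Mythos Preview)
Your plan is correct and matches the paper's proof closely: the paper, too, invokes Theorem~\ref{direct sum} for $\ker(\chi)=\operatorname{im}(\Psi_2)$, and for $\ker(\Phi_1)\subseteq\operatorname{im}(\chi)$ it performs exactly the normalization you describe (use the $A$-homotopy $\sigma(T)$ of $\beta$ to reduce to $(\beta^{-1}\alpha(X),I_2)$, then take $M(X,T)=\sigma(T)^{-1}\theta(X,T)$ to get the preimage under~$\chi$).

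One caution: you label exactness at $\pi_1(SL_2(A[X]))\oplus\pi_1(SL_2(A))$ and at $\Gamma(A[X])\oplus\Gamma(A)$ as ``formal diagram-chasing,'' but in the paper these are not purely formal. Both require the same $(1-X),\,X$ interpolation trick you cite from Theorem~\ref{direct sum}. For $\ker(\Psi_2)\subseteq\operatorname{im}(\Psi_1)$, given null-homotopies $\gamma(T,S),\gamma'(T,S)$ of the two components, the paper sets $\theta(X,T)=\gamma(T,1-X)^{-1}\gamma'(T,X)^{-1}\alpha(X)(T)$ to force $\theta(0,T)=\theta(1,T)=\beta(T)$, and then uses $M(X,T,S)=\gamma(T,(1-X)S)^{-1}\gamma'(T,XS)^{-1}\alpha(X)(T)$ to show $[\theta]=[\alpha]$ in $\pi_1(SL_2(A[X]))$. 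The argument for $\ker(\Phi_2)\subseteq\operatorname{im}(\Phi_1)$ is the parallel construction $\gamma(X)=\sigma'(X)\sigma(1-X)\alpha(X)$ with homotopy $\theta(X,T)=\sigma'(XT)\sigma((1-X)T)$. So the technique is uniform across all four nodes, not concentrated at~$\Gamma(B)$ as you anticipate; but since you already have the interpolation device in hand, this should not cause real trouble.
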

\begin{proof}
Since $[\alpha(0)(T)] =  [\alpha(1)(T)]$ in $\pi_1(SL_2(A[X]))$, $[\alpha(0)(T)\beta(T)^{-1}] =  [\alpha(1)(T)\\\beta(T)^{-1}]$. Now by Theorem \ref{direct sum}, we have $\ker(\chi)= Im(\Psi_2)$.

\noindent\textbf{Claim:  $\ker(\Psi_2)= Im(\Psi_1)$}

It is clear that  $Im(\Psi_1)\subseteq \ker(\Psi_2)$. Conversely, suppose $([\alpha(X)(T)], [\beta(T)])\in \ker(\Psi_2)$. Then, $([\alpha(0)(T)\beta(T)^{-1}], [\alpha(1)(T)\beta(T)^{-1}]) = ([I_2], [I_2])$ in $\pi_1(SL_2(A\oplus A))$. Hence, there exists $(\gamma(T, S), \gamma'(T, S)) \in SL_2(A\oplus A)[T, S]$ such that $$(\gamma(T, 0), \gamma'(T, 0)) = (I_2, I_2)$$ $$(\gamma(T, 1), \gamma'(T, 1))= (\alpha(0)(T)\beta(T)^{-1}, \alpha(1)(T)\beta(T)^{-1})$$ and $$(\gamma(0, S), \gamma'(0, S))= (\gamma(1, S), \gamma'(1, S))=(I_2, I_2).$$

Consider, $\theta(X, T) = \gamma(T, 1-X)^{-1} \gamma'(T, X)^{-1}\alpha(X)(T)$. Then, it is clear that $\theta(0, T) = \theta(1, T) = \beta(T)$. Hence, $(\theta(X, T), \beta(T))\in \pi_1(SL_2(B))$.

Now take  $(M(X, T, S), \beta(T))  = (\gamma(T, (1-X)S)^{-1} \gamma'(T, XS)^{-1}\alpha(X)(T), \beta(T))$. Then it is clear that $$(M(X, T, 0), \beta(T))  = (\alpha(X)(T), \beta(T)),$$ $$(M(X, T, 1), \beta(T))  = (\theta(X, T), \beta(T)),$$ $$(M(X, 0, S), \beta(0))  =(M(X, 1, S), \beta(1))  =(I_2, I_2).$$ 

Thus, $[\theta(X, T), \beta(T)] = [\alpha(X)(T), \beta(T)]$ in $\pi_1(SL_2(A[X]))\oplus \pi_1(SL_2(A))$. So, $\Psi_1([\theta(X, T), \beta(T)]) = [\alpha(X)(T), \beta(T)]$. This shows that $\ker(\Psi_2)\subseteq Im(\Psi_1)$. Therefore, $\ker(\Psi_2)= Im(\Psi_1)$.

\noindent\textbf{Claim:  $\ker(\Phi_1)= Im(\chi)$}

By the definition  of $\chi$, $Im(\chi)\subseteq \ker(\Phi_1).$ Conversely, suppose $\Phi_1([\alpha(X), \beta])=([I_2, I_2]).$ Then, there exists $(\theta(X, T), \sigma(T))\in SL_2(A[X, T])\times SL_2(A[T])$ such that 
$(\theta(X, 0), \sigma(0)) = (I_2, I_2)$ and $(\theta(X, 1), \sigma(1)) = (\alpha(X), \beta)$.

For $(\sigma(T)^{-1}, \sigma(T)^{-1})$, we have $(\sigma(0)^{-1}, \sigma(0)^{-1}) = (I_2, I_2)$ and 
$(\sigma(1)^{-1}, \sigma(1)^{-1})\\(\alpha(X), \beta) = (\beta^{-1}\alpha(X), I_2).$ This shows that $[\alpha(X), \beta] = [\beta^{-1}\alpha(X), I_2]$ in $\Gamma(B)$. Consider, $M(X, T) = \sigma(T)^{-1}\theta(X, T)$. Then $[M(0, T), M(1, T)] \in \pi_1(SL_2(A\oplus A))$ and $\chi([M(0, T), M(1, T)]) = (\beta^{-1}\alpha(X), I_2).$ Hence, $\ker(\Phi_1)\subseteq Im(\chi).$ So, $\ker(\Phi_1)= Im(\chi).$

\noindent\textbf{Claim:  $\ker(\Phi_2)= Im(\Phi_1)$}

It is clear that  $Im(\Phi_1)\subseteq \ker(\Phi_2)$. Conversely, suppose $([\alpha(X), \beta])\in \ker(\Phi_2)$. This implies that $[\alpha(0)\beta^{-1}, \alpha(1)\beta^{-1}] = [I_2, I_2]$ in $\Gamma(A\oplus A)$. So, there exists $(\sigma(T), \sigma'(T)) \in SL_2((A\oplus A)[T])$ such that $$(\sigma(0), \sigma'(0))= (I_2, I_2)$$ and $$(\sigma(1), \sigma'(1))(\alpha(0)\beta^{-1}, \alpha(1)\beta^{-1})= (I_2, I_2) \Rightarrow (\sigma(1), \sigma'(1))(\alpha(0), \alpha(1))= (\beta, \beta).$$

Consider $\gamma(X) = \sigma'(X)\sigma(1-X)\alpha(X)$. Then, $[\gamma(X), \beta] \in \Gamma(B)$.

Now, take $(\theta(X, T), I_2) = (\sigma'(XT)\sigma((1-X)T), I_2) \in SL_2((A[X]\oplus A)[T])$. Then $(\theta(X, 0), I_2) = (I_2, I_2)$ and $(\theta(X, 1), I_2)(\alpha(X), \beta) = (\gamma(X), \beta)$. This shows that $[\gamma(X), \beta] = [\alpha(X), \beta]$ in  $\Gamma(A[X])\oplus \Gamma(A)$. This shows that $\ker(\Phi_2)\subseteq Im(\Phi_1)$. Therefore, $\ker(\Phi_2)= Im(\Phi_1)$.
\end{proof}

Now, by using Corollary \ref{trivial}, we prove an algebraic counterpart of $H^1(S^1, \mathbb{Z})\cong \mathbb{Z}$.

\subsection{Algebraic counterpart of $H^1(S^1, \mathbb{Z})\cong \mathbb{Z}$:}

We know that the coordinate ring $\frac{\mathbb{R}[X, Y]}{(X^2+Y^2-1)}$ of real circle $S^1$ is the ring of polynomial functions from $S^1 \to \mathbb{R}$. Since a unit circle is homeomorphic to the quotient of unit interval by identifying  $0$ and $1$ as a single point, the ring $\frac{\mathbb{R}[X, Y]}{(X^2+Y^2-1)}$ is similar to the ring of those polynomial functions from $[0, 1] \to \mathbb{R}$ which have the same image at  $0$ and $1$. Therefore, we can think of $\frac{\mathbb{R}[X, Y]}{(X^2+Y^2-1)}$ to be like $B$, where $B = \{f(X)\in \mathbb{R}[X] \mid f(0) = f(1)\}$. In fact, $B$ is the fibre product of $\mathbb{R}[X]$ and  $\mathbb{R}$ over $\mathbb{R}\oplus \mathbb{R}$ by the given ring homomorphisms $\Delta: \mathbb{R} \to \mathbb{R}\oplus \mathbb{R}$ defined by $\Delta(a)= (a, a)$, and $\delta: \mathbb{R}[X] \to \mathbb{R}\oplus \mathbb{R}$ defined by $\delta(f(X))= (f(0), f(1))$, that is, we have the following fibre product diagram \[
\begin{tikzcd}
B \arrow{r}{} \arrow[swap]{d}{} & \mathbb{R}[X] \arrow{d}{\delta} \\
\mathbb{R}\arrow{r}{\Delta} & \mathbb{R}\oplus \mathbb{R}
\end{tikzcd}
\] 
Since $\Gamma(\mathbb{R})$ is trivial, by this fibre product diagram and Corollary \ref{trivial}, we have $$\Gamma(B)\cong\pi_1(SL_2(\mathbb{R})).$$ 

\subsection{Algebraic counterpart of $H^1(S^2, \mathbb{Z})\cong \{0\}$:} 
In topology, we know that $H^1(S^2, \mathbb{Z})\cong \{0\}$, that is, any continuous map from $S^2 \to S^1$ is homotopic to a constant. As an algebraic counterpart, we prove that $\Gamma(B) \cong \{I_2\}$ by assuming the algebraic counterpart of the fact that $S^1$ is connected, where $B$ is a ring similar to the coordinate ring of the real 2-sphere.

Since the 2-sphere is homeomorphic to a closed disc $D^2$ in $\mathbb{R}^2$ with its boundary identified to a single point and the coordinate ring of the 2-sphere is the ring of polynomial functions from $S^2 \to \mathbb{R}$, we can say that the coordinate ring of the 2-sphere is like the ring of polynomial functions from $D^2 \to \mathbb{R}$ which are constant on $S^1$. Thus, $B = \{f(X, Y)\in \mathbb{R}[X, Y]  \mid \overline{f(X, Y)} = \bar{a}\}$ is like the coordinate ring of the real 2-sphere (where bar denotes the modulo ${(X^2+Y^2-1)}$ and $a \in \mathbb{R}$). In fact, $B$ is the fibre product of $\mathbb{R}[X, Y]$ and  $\mathbb{R}$ using ring homomorphisms $i: \mathbb{R} \to\frac{\mathbb{R}[X, Y]}{(X^2+Y^2-1)}$ defined by $i(a)= \bar{a}$, and $\delta: \mathbb{R}[X, Y] \to \frac{\mathbb{R}[X, Y]}{(X^2+Y^2-1)}$ defined by $\delta(f(X, Y))= \overline{f(X, Y)}$, that is, we have the following commutative diagram \[\begin{tikzcd}
B \arrow{r}{} \arrow[swap]{d}{} & \mathbb{R}[X, Y] \arrow{d}{\delta} \\
\mathbb{R} \arrow{r}{i} & \frac{\mathbb{R}[X, Y]}{(X^2+Y^2-1)}
\end{tikzcd}
\]   
%
%

\begin{remark}\label{surjective}
 $\pi_1(SL_2(\mathbb{R}[X, Y])\cong \pi_1(SL_2(\mathbb{R}))$ (\cite{RSS2}).
\end{remark}

\begin{theorem}
Assume that $\pi_1\bigg(SL_2\bigg(\frac{\mathbb{R}[X, Y]}{(X^2+Y^2-1)}\bigg)\bigg) \cong \pi_1(SL_2(\mathbb{R}))$. Then $\Gamma(B) = \{I_2\}$.
\end{theorem}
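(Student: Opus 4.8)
The plan is to run the Mayer--Vietoris sequence of Theorem~\ref{fiber direct} for the present fibre product, with $A[X]$ replaced by $\mathbb{R}[X,Y]$, with $A$ replaced by $\mathbb{R}$, and with $A\oplus A$ replaced by $C:=\frac{\mathbb{R}[X,Y]}{(X^2+Y^2-1)}$, the two structure maps being the surjection $\delta\colon\mathbb{R}[X,Y]\to C$ and the constant inclusion $i\colon\mathbb{R}\to C$. The relevant stretch of the sequence is
\[
\pi_1(SL_2(\mathbb{R}[X,Y]))\oplus\pi_1(SL_2(\mathbb{R}))\overset{\Psi_2}{\to}\pi_1(SL_2(C))\overset{\chi}{\to}\Gamma(B)\overset{\Phi_1}{\to}\Gamma(\mathbb{R}[X,Y])\oplus\Gamma(\mathbb{R}),
\]
where $\Psi_2([\alpha],[\beta])=[\delta_*(\alpha)\,i_*(\beta)^{-1}]$, where $\chi$ is the connecting map built exactly as in Theorem~\ref{direct sum}, and where $\Phi_1$ is induced by the two projections. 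By exactness, $\Gamma(B)=Im(\chi)$ once the term to its right vanishes, and $Im(\chi)=\{I_2\}$ once $\Psi_2$ is surjective; so the whole argument reduces to these two statements.

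For the right-hand term, $\Gamma(\mathbb{R})=\{I_2\}$ because $SL_2(\mathbb{R})$ is connected (the same fact already used in the $S^1$ computation). To see $\Gamma(\mathbb{R}[X,Y])=\{I_2\}$ I would use the contraction homotopy: given $\alpha(X,Y)\in SL_2(\mathbb{R}[X,Y])$, the matrix $\alpha(TX,TY)$ lies in $SL_2(\mathbb{R}[X,Y][T])$ (its determinant is the constant $1$) and interpolates between $\alpha(0,0)\in SL_2(\mathbb{R})$ at $T=0$ and $\alpha(X,Y)$ at $T=1$, so $\alpha$ is connected to the constant matrix $\alpha(0,0)$, which is in turn connected to $I_2$ because $\Gamma(\mathbb{R})$ is trivial. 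Hence $\Gamma(\mathbb{R}[X,Y])\oplus\Gamma(\mathbb{R})=\{I_2\}$, and exactness at $\Gamma(B)$ gives $Im(\chi)=\ker(\Phi_1)=\Gamma(B)$; that is, $\chi$ is surjective.

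It remains to show $\Psi_2$ is surjective, for then exactness at $\pi_1(SL_2(C))$ forces $\ker(\chi)=\pi_1(SL_2(C))$, so $\chi$ is trivial and $Im(\chi)=\{I_2\}$; combined with the surjectivity of $\chi$ this yields $\Gamma(B)=Im(\chi)=\{I_2\}$. Taking $\beta=I_2$ in $\Psi_2([\alpha],[\beta])=[\delta_*(\alpha)\,i_*(\beta)^{-1}]$ shows it is enough that $\delta_*\colon\pi_1(SL_2(\mathbb{R}[X,Y]))\to\pi_1(SL_2(C))$ be onto. This is where the hypothesis enters. Writing $c\colon\mathbb{R}\hookrightarrow\mathbb{R}[X,Y]$ for the constant inclusion, one has $i=\delta\circ c$, hence $i_*=\delta_*\circ c_*$. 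By Remark~\ref{surjective} the map $c_*$ is an isomorphism, and by the standing assumption $i_*$ is an isomorphism; therefore $\delta_*=i_*\circ c_*^{-1}$ is an isomorphism, in particular surjective. So $\Psi_2$ is surjective, and the assumption (the algebraic avatar of the connectedness of $S^1$) is used precisely at this point and nowhere else.

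The main obstacle is not any of these verifications but the very first step. Theorem~\ref{fiber direct} is proved only for the fibre product over $A\oplus A$ with evaluation at $0$ and $1$, whereas here the base is $C$ and the pertinent map is the quotient $\delta$. I would therefore have to re-establish the connecting homomorphism $\chi$ and the exactness of the relevant segments for this fibre product, repeating the homotopy-lifting computations of Theorem~\ref{direct sum} and Theorem~\ref{fiber direct} with $A\oplus A$ replaced by $C$; the surjectivity of $\delta$ (playing the role that surjectivity of the evaluation map played before) is what should let those arguments go through essentially verbatim. Once that Mayer--Vietoris sequence is in hand, the computation above is routine.
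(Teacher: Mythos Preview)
Your argument is correct in outline, and its mathematical content coincides with the paper's: both use that $\Gamma(\mathbb{R}[X,Y])$ and $\Gamma(\mathbb{R})$ are trivial to connect any $(\alpha,\beta)\in SL_2(B)$ to something whose obstruction is a loop in $SL_2(C)$, and then use the hypothesis together with Remark~\ref{surjective} to kill that loop. The difference is organizational. You package everything through the Mayer--Vietoris sequence and then, as you yourself note, must re-prove Theorem~\ref{direct sum} and Theorem~\ref{fiber direct} for the fibre product over $C$ rather than over $A\oplus A$. The paper sidesteps this entirely: it never invokes an exact sequence but instead writes down the path directly. Given $(\alpha(X,Y),\beta)\in SL_2(B)$, it chooses paths $\theta(X,Y)(T)$ and $\gamma(T)$ to $\alpha$ and $\beta$ (triviality of the two $\Gamma$'s), observes that $\overline{\theta\gamma^{-1}}$ is a loop in $SL_2(C)$, uses the hypothesis and Remark~\ref{surjective} to write this loop as $\overline{\eta\,\delta^{-1}}$ with $\eta$ a loop in $SL_2(\mathbb{R}[X,Y])$ and $\delta$ a loop in $SL_2(\mathbb{R})$, and then checks that $(\eta^{-1}\theta,\delta^{-1}\gamma)$ is a path in $SL_2(B[T])$ from $(I_2,I_2)$ to $(\alpha,\beta)$. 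This is exactly the hand-unrolled version of your exactness argument at $\pi_1(SL_2(C))$ and at $\Gamma(B)$, but it costs only a few lines and avoids the ``main obstacle'' you flagged. One small point common to both arguments: the hypothesis is used as the assertion that the natural map $i_*\colon\pi_1(SL_2(\mathbb{R}))\to\pi_1(SL_2(C))$ is an isomorphism, not merely that the two groups are abstractly isomorphic; the paper reads it the same way.
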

\begin{proof}
Let $(\alpha(X, Y), \beta)\in \Gamma(B)$. Then $\alpha(X, Y)\in \Gamma(\mathbb{R}[X, Y])$ and $\beta\in \Gamma(\mathbb{R})$. Since $\Gamma(\mathbb{R}[X, Y])$ and $\Gamma(\mathbb{R})$ are trivial, there exist $\theta(X, Y)(T) \in \mathbb{R}[X, Y] [T]$ and $\gamma(T) \in \mathbb{R}[T]$ such that $\theta(X, Y)(0) = I_2, \theta(X, Y)(1) = \alpha(X, Y)$ and $\gamma(0) = I_2, \gamma(1) = \beta$.

Consider $M(X, Y)(T) =\theta(X, Y)(T)\gamma(T)^{-1}$. Then it is clear that $\overline{M(X, Y)(0)}= \overline{M(X, Y)(1)} = I_2.$ Thus $\overline{M(X, Y)(T)}\in \pi_1\bigg(SL_2\bigg(\frac{\mathbb{R}[X, Y]}{(X^2+Y^2-1)}\bigg)\bigg)$

By assumption and Remark \ref{surjective}, there exists $\big([\eta(X, Y)(T)], [\delta(T)]\big) \in \pi_1(SL_2(\mathbb{R}[X, Y])\\ \oplus \pi_1(SL_2(\mathbb{R}))$ such that $\overline{M(X, Y)(T)} = \overline{\eta(X, Y)(T)\delta(T)^{-1}}$. 

Clearly, the matrix $\big(\eta(X, Y)(T)^{-1}\theta(X, Y)(T), \delta(T)^{-1}\gamma(T)\big) \in SL_2(B[T])$. Also $$\big(\eta(X, Y)(0)^{-1}\theta(X, Y)(0), \delta(0)^{-1}\gamma(0)\big) = (I_2, I_2)$$ and $$\big(\eta(X, Y)(1)^{-1}\theta(X, Y)(1), \delta(1)^{-1}\gamma(1)\big) = (\alpha(X, Y), \beta).$$ Therefore, $\Gamma(B) = \{I_2\}$.
\end{proof}

\begin{remark}
The assumption that $\pi_1\bigg(SL_2\bigg(\frac{\mathbb{R}[X, Y]}{(X^2+Y^2-1)}\bigg)\bigg) \cong \pi_1(SL_2(\mathbb{R}))$ is the algebraic assumption corresponding to the topological fact the $S^1$ is path connected which is needed to show that $S^2$ is simply connected and any map $S^2 \to S^1$ is homotopic to a constant. We conjecture that this assumption is valid.
\end{remark}

\section{On some heuristic examples and remarks}
Let   \[\begin{tikzcd}
A \arrow{r}{} \arrow[swap]{d}{} & B \arrow{d}{f} \\
C \arrow{r}{g} & D
\end{tikzcd}
\] be a fibre product diagram of rings corresponding to a Milnor's square. Here, we assume that $f$ is a surjective ring homomorphism. Clearly, $A= \{(b, c) \mid f(b) = g(c)\}$. Let $((b_1, c_1), (b_2, c_2), (b_3, c_3))\in Um_3(A)$ such that $(b_1, b_2, b_3)$ and $(c_1, c_2, c_3)$ are completable over $B$ and $C$, respectively. , we will get a co-cycle  in $SL_2(D)$ (as in \cite{RS}). We consider the image of this co-cycle in $\Gamma(D)$. We expect that the unimodular row to be completable if the associated co-cycle belongs to $im(\Gamma(f))im(\Gamma(g))$

Since $(b_1, b_2, b_3)$ and $(c_1, c_2, c_3)$ are completable over $B$ and $C$, there exist $\theta \in SL_3(B)$ and $\sigma\in SL_3(C)$ such that $$\theta\begin{pmatrix}
b_1\\ b_2\\ b_3
\end{pmatrix} = \begin{pmatrix}
1\\ 0\\ 0
\end{pmatrix} ~\text{and}~ \sigma\begin{pmatrix}
c_1\\ c_2\\ c_3
\end{pmatrix} = \begin{pmatrix}
1\\ 0\\ 0
\end{pmatrix}.$$ Therefore, $$g(\sigma) f(\theta)^{-1}\begin{pmatrix}
1\\ 0\\ 0
\end{pmatrix} = \begin{pmatrix}
1\\ 0\\ 0
\end{pmatrix}.$$ Thus, we have a matrix $\lambda\in SL_2(D)$ and $d_{12}, d_{13} \in D$ such that $$g(\sigma) f(\theta)^{-1} = \begin{pmatrix}
1&d_{12} &d_{13}\\ 0& \lambda&\\ 0& &
\end{pmatrix}.$$

\begin{definition} The element $\lambda \in SL_2(D)$ is called a co-cycle associated to the unimodular row $((b_1, c_1), (b_2, c_2), (b_3, c_3))$. As we know that every stably free module of rank $2$ is equivalent to a unimodular row of length $3$, so we have a co-cycle associated by a stably free module of rank $2$ over $B$.
\end{definition}

\begin{proposition}\label{splits}
Let $((b_1, c_1), (b_2, c_2), (b_3, c_3))\in Um_3(A)$  such that $(b_1, b_2, b_3)$ and $(c_1, c_2, c_3)$ are completable over $B$ and $C$, respectively. Let $\lambda \in SL_2(D)$ be an associated co-cycle. Then $((b_1, c_1), (b_2, c_2), (b_3, c_3))$ is completable over $A$ if and only if $\lambda$ splits.
\end{proposition}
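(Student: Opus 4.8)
The plan is to use the Milnor-square structure $A=B\times_D C$ to convert completability over $A$ into a patching problem over $D$, and then to recognise the splitting of the co-cycle $\lambda$ directly in the patching data. The starting point is the elementary but essential reformulation coming from the fibre product: since $A=\{(b,c)\mid f(b)=g(c)\}$, we have $M_3(A)=\{(\beta,\gamma)\in M_3(B)\times M_3(C)\mid f(\beta)=g(\gamma)\}$ entrywise, and a pair $(\beta,\gamma)$ lands in $SL_3(A)$ exactly when $\beta\in SL_3(B)$ and $\gamma\in SL_3(C)$ (the determinant of the pair has image $1$ in both $B$ and $C$, hence equals $1_A$). Thus I would first show that the row $((b_1,c_1),(b_2,c_2),(b_3,c_3))$ is completable over $A$ if and only if there exist completions $\beta\in SL_3(B)$ of $(b_1,b_2,b_3)$ and $\gamma\in SL_3(C)$ of $(c_1,c_2,c_3)$ satisfying the patching condition $f(\beta)=g(\gamma)$; the first column of the associated matrix in $SL_3(A)$ is then automatically the given row. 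Note this reformulation does not yet use surjectivity of $f$.

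Next I would parametrize all admissible completions against the ones already in hand. Since $\theta^{-1}$ completes $(b_1,b_2,b_3)$ and $\sigma^{-1}$ completes $(c_1,c_2,c_3)$, every completion has the form $\beta=\theta^{-1}P$ and $\gamma=\sigma^{-1}Q$, where $P\in SL_3(B)$ and $Q\in SL_3(C)$ fix the first standard basis vector, i.e. are block upper triangular $P=\begin{pmatrix}1&u\\0&p\end{pmatrix}$, $Q=\begin{pmatrix}1&v\\0&q\end{pmatrix}$ with $p\in SL_2(B)$, $q\in SL_2(C)$ and rows $u\in B^2$, $v\in C^2$. Writing the co-cycle in block form as $\eta:=g(\sigma)f(\theta)^{-1}=\begin{pmatrix}1&d\\0&\lambda\end{pmatrix}$, the patching condition $f(\beta)=g(\gamma)$ rearranges to $\eta=g(Q)f(P)^{-1}$. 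A direct block multiplication shows that the lower-right $2\times2$ block of $g(Q)f(P)^{-1}$ is $g(q)f(p)^{-1}$, so the condition on that block reads $\lambda=g(q)f(p)^{-1}$, which is precisely the statement that $\lambda$ splits.

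Finally I would dispose of the top row, which is where I expect the only genuinely delicate point to lie. The equation coming from the remaining entries is $(g(v)-f(u))f(p)^{-1}=d$, and one must check that imposing the $2\times2$ block identity $\lambda=g(q)f(p)^{-1}$ does not over-determine this top row. Given any $p,q$ realising the block condition, the equation is solvable for $u,v$: for instance take $v=0$ and choose $u\in B^2$ with $f(u)=-d\,f(p)$, which is possible exactly because $f$ is surjective. Hence the full patching identity $\eta=g(Q)f(P)^{-1}$ holds for some admissible $P,Q$ if and only if $\lambda$ splits, and by the reformulation of the first paragraph this is equivalent to completability over $A$. The converse direction runs the same computation backwards: from a completion $(\beta,\gamma)\in SL_3(A)$ one sets $P=\theta\beta$ and $Q=\sigma\gamma$, which fix the first basis vector and therefore have $SL_2$-blocks $p,q$ recovering the splitting $\lambda=g(q)f(p)^{-1}$. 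I expect the bookkeeping of the block decomposition and the careful use of surjectivity in this last step to be the main obstacle, while the remainder is formal.
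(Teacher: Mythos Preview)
Your proof is correct and follows essentially the same route as the paper's: both reduce completability over $A$ to the existence of compatible completions over $B$ and $C$ via the fibre product, compare an arbitrary completion against the given $\theta^{-1},\sigma^{-1}$ through a matrix fixing $e_1$, read off the splitting of $\lambda$ from the lower $2\times 2$ block, and invoke surjectivity of $f$ to handle the top row. The only cosmetic difference is that you parametrize all completions at once via $\beta=\theta^{-1}P$, $\gamma=\sigma^{-1}Q$ and solve the patching equation $\eta=g(Q)f(P)^{-1}$ uniformly, whereas the paper treats the two implications separately and constructs explicit matrices $M,N$ for the converse; the underlying computation is the same.
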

\begin{proof} 
Since $\lambda$ is an associated co-cycle, $g(\sigma) f(\theta)^{-1} = \begin{pmatrix}
1&d_{12} &d_{13}\\ 0& \lambda&\\ 0& &
\end{pmatrix},$ where $\theta \in SL_3(B)$ and $\sigma\in SL_3(C)$ are completion of $(b_1, b_2, b_3)$ and $(c_1, c_2, c_3)$, respectively, and $d_{12}, d_{13}\in D$.

Suppose $((b_1, c_1), (b_2, c_2), (b_3, c_3))$ is completable. Then there exists a matrix $(M, N)\in SL_3(A)$ such that $$M\begin{pmatrix}
b_1\\ b_2\\ b_3
\end{pmatrix} = \begin{pmatrix}
1\\ 0\\ 0
\end{pmatrix} ~\text{and}~ N\begin{pmatrix}
c_1\\ c_2\\ c_3
\end{pmatrix} = \begin{pmatrix}
1\\ 0\\ 0
\end{pmatrix}.$$

Clearly,  $$f(M)f(\theta)^{-1}\begin{pmatrix}
1\\ 0\\ 0
\end{pmatrix} = \begin{pmatrix}
1\\ 0\\ 0
\end{pmatrix} ~\text{and}~ g(N)g(\sigma)^{-1}\begin{pmatrix}
1\\ 0\\ 0
\end{pmatrix} = \begin{pmatrix}
1\\ 0\\ 0
\end{pmatrix}.$$ Therefore, $$f(M)f(\theta)^{-1} = \begin{pmatrix}
1&\star &\star\\ 0& f(\gamma)&\\ 0& &
\end{pmatrix}~\text{and}~ g(N)g(\sigma)^{-1} = \begin{pmatrix}
1&\star &\star\\ 0& g(\tau)&\\ 0& &
\end{pmatrix},$$ for some $\gamma \in SL_2(B)$ and $\tau\in SL_2(C).$

Since $f(M) = g(N)$, $g(\sigma) f(\theta)^{-1} = (g(N)g(\sigma)^{-1})^{-1}f(M)f(\theta)^{-1} $. Thus,  $$\begin{pmatrix}
1&a_{12} &a_{13}\\ 0& \lambda&\\ 0& &
\end{pmatrix}= \begin{pmatrix}
1&\star &\star\\ 0& g(\tau)^{-1}&\\ 0& &
\end{pmatrix}\begin{pmatrix}
1&\star &\star\\ 0& f(\gamma)&\\ 0& &
\end{pmatrix}.$$ Hence, $\lambda = g(\tau)^{-1}f(\gamma)$, that is, $\lambda$ splits. 

Conversely, suppose the co-cycle $\lambda$ splits, that is, there exist $\gamma\in SL_2(B)$ and $\delta\in SL_2(C)$ such that $$\lambda = g(\delta)f(\gamma).$$

Therefore, $$g(\sigma) f(\theta)^{-1} = \begin{pmatrix}
1&0 &0\\ 0& g(\delta)&\\ 0& &
\end{pmatrix}\begin{pmatrix}
1&d_{12} &d_{13}\\ 0& f(\gamma)&\\ 0& &
\end{pmatrix}.$$
 Since $f$ is surjective, there exists $b_{12}$ and $b_{13}$ in $B$ such that $f(b_{12}) = d_{12}$ and $f(b_{13}) = d_{13}$

Consider  $M = \begin{pmatrix}
1&b_{12} &b_{13}\\ 0& \gamma&\\ 0& &
\end{pmatrix}\theta \in SL_3(B)$
and
$N = \begin{pmatrix}
1&0 &0\\ 0& \delta^{-1}&\\ 0& &
\end{pmatrix}\sigma \in SL_3(C)$. It is clear that $f(M) = g(N),$ $M\begin{pmatrix}
b_1\\ b_2\\ b_3
\end{pmatrix} = \begin{pmatrix}
1\\ 0\\ 0
\end{pmatrix}$ and $N\begin{pmatrix}
c_1\\ c_2\\ c_3
\end{pmatrix} = \begin{pmatrix}
1\\ 0\\ 0
\end{pmatrix}$. Thus, $(M^{-1}, N^{-1}) \in SL_3(A)$ is a completion of the unimodular row $((b_1, c_1), (b_2, c_2), (b_3, c_3))$. 
\end{proof}
\begin{remark}\label{obs} We see that every unimodular row of length $3$ over $A$ gives a co-cycle in $SL_2(D)$ which is stably elementary.  We consider the associated co-cycle to a unimodular row over $A$ as an element in $\Gamma(D)$. By Proposition \ref{splits}, we also expect that the unimodular row to be completable if the associated co-cycle belongs to $im(\Gamma(f))im(\Gamma(g))$. That is, the obstruction for a unimodular row of length $3$ to be completable should lie in the group $\frac{\Gamma(D)}{im(\Gamma(f))im(\Gamma(g))}.$ By Milnor's construction of projective modules (see \cite{JM}), every stably elementary element of $SL_2(D)$ gives a stably free module over $A$ of rank $2$, that is, an unimodular row of length $3$ over $A$. Thus, the obstruction for a freeness of a stably free module of rank $2$ lies in the group $\frac{\Gamma(D)}{im(\Gamma(f))im(\Gamma(g))}.$ Now, we explore some heuristic examples.
\end{remark}

\vspace{0.2cm}
\noindent\textbf{Stably free modules over cylinder:} One can get a cylinder by identifying points on two parallel lines in plane. By this motivation, we look at $\mathbb{R}^2$, and two lines $x = 0$ and $x =1$.

Consider $A = \mathbb{R}[Y]$ and the ring homomorphisms 
$\Delta: \mathbb{R}[Y] \to\mathbb{R}[Y]\oplus\mathbb{R}[Y]$ defined as $\Delta(f(Y))= (f(Y), f(Y))$, and $\delta: \mathbb{R}[X, Y] \to \mathbb{R}[Y]\oplus\mathbb{R}[Y]$ defined as $\delta(f(X, Y))= (f(0, Y), f(1, Y))$. Thus, we have the following fibre product diagram \[\begin{tikzcd}
B \arrow{r}{} \arrow[swap]{d}{} & \mathbb{R}[X, Y] \arrow{d}{\delta} \\
\mathbb{R}[Y] \arrow{r}{\Delta} & \mathbb{R}[Y]\oplus\mathbb{R}[Y]
\end{tikzcd}
\] where $B = \{(f(X, Y), g(Y)) \mid f(0, Y) = f(1, Y)= g(Y)\}$. The ring $B$ is an algebraic counterpart of the ring of continuous functions on the cylinder. 

Since $SL_2(\mathbb{R}[Y]) = E_2(\mathbb{R}[Y])$, it is easy to see that an associated cocycle in $SL_2(\mathbb{R}[Y]\oplus\mathbb{R}[Y])$ to a unimodular row over $B$ always splits. So, one expects that any unimodular row of length $3$ over $B$ is completable.

\vspace{0.2cm}

\noindent\textbf{Stably free modules over torus:} We know that a torus is the cartesian  product of $S^1 \times S^1$ and a unit circle is homeomorphic to the quotient of unit interval by identifying  $0$ and $1$ as a single point. Hence, a torus is the quotient of the cylinder $S^1 \times [0, 1]$ by identifying  $(x, 0)$ and $(x, 1)$ as a single point for every $x\in S^1$. So, we can say that the coordinate ring of torus is the ring of polynomial functions on $S^1 \times [0, 1]$ to $\mathbb{R}$ whose images are same at $(x, 0)$ and $(x, 1)$ for every $x\in S^1$.

Let $A = \frac{\mathbb{R}[X, Y]}{(X^2+Y^2-1)}$ in the fibre product diagram (\ref{1}).
Then, $B = \{f(T)\in A[T]\times A \mid f(0) = f(1)\}$. We can say $B$ is like the coordinate ring of torus. By Remark \ref{obs}, one expects that the obstruction to the freeness of stably free module of rank 2 over the coordinate ring of torus lies in the group $\frac{\Gamma(A\oplus A)}{im(\Gamma(\Delta))im(\Gamma(\delta))}\cong\Gamma(A)$.

\vspace{0.2cm}

\noindent\textbf{Stably free modules over Klein bottle:} 
Consider the ring $A = \frac{\mathbb{R}[X, Y]}{(X^2+Y^2-1)}$. Let $h: A \longrightarrow A$ be the ring homomorphism defined by $h(x) =x$ and $h(y) = -y$, where $x$ and $y$ are the images of $X$ and $Y$ in the ring $A$. Consider the ring homomorphisms $\Delta: A \to A\oplus A$ defined by $\Delta(a)= (a, h(a))$, and $\delta: A[T] \to A\oplus A$ defined by $\delta(f(T))= (f(0), f(1))$. Suppose $C$ is the pull back of the diagram \[\begin{tikzcd}
C \arrow{r}{} \arrow[swap]{d}{} & A[T] \arrow{d}{\delta} \\A \arrow{r}{\Delta} & A\oplus A
\end{tikzcd}
\]

Then, $C = \{(f(T), a)\in A[T]\times A \mid f(0) = a,  f(1)= h(a)\}$ and it is like the coordinate ring of Klein bottle. It is easy to see that  if $\Gamma(A) \cong \mathbb{Z}$, then $im(\Gamma(\delta))$ is generated by $(1, 1)$ and $im(\Gamma(\Delta))$ is generated by $(1, -1)$. Hence, $\frac{\Gamma(A\oplus A)}{im(\Gamma(\Delta))im(\Gamma(\delta))}\cong \mathbb{Z}_2$. By Remark \ref{obs}, one expects that  the obstruction for the freeness of stably free modules of rank $2$ over the coordinate ring of Klein bottle lies in the group $\mathbb{Z}_2$. 
\vspace{0.2cm}

\noindent\textbf{Stably free modules over projective space:} We know that the real projective space $\mathbb{P}^2(\mathbb{R})$ is equivalent to unit disk with antipodal points on the  boundary identified. In fact, set of all continuous functions on $\mathbb{P}^2(\mathbb{R})$ is equal to the set of all continuous functions $f$ on $\mathbb{R}^2$ whose restriction to $S^1$ satisfy  $f(-x, -y) = f(x, y)$.

Now, consider the pull back diagram 
\[\begin{tikzcd}
P \arrow{r}{} \arrow[swap]{d}{} & \mathbb{R}[X, Y] \arrow{d}{\nu} \\
\frac{\mathbb{R}[X, Y]}{(X^2+Y^2-1)} \arrow{r}{\mu} & \frac{\mathbb{R}[X, Y]}{(X^2+Y^2-1)}
\end{tikzcd}
\]
where $\nu$ is the natural quotient map and $\mu(f(x, y))= f(x^2-y^2, 2xy)$. So, $P$ is like the ring of polynomial functions on $\mathbb{P}^2(\mathbb{R})$. Since $\Gamma(\mathbb{R}[X, Y])$ is a trivial group, as in Remark \ref{obs}, the obstruction for a unimodular row to be completable is in the group $\frac{\Gamma(A)}{im(\Gamma(\mu))}$, where $A = \frac{\mathbb{R}[X, Y]}{(X^2+Y^2-1)}$ (that is, $\frac{\Gamma(A)}{im(\Gamma(\mu))}$ is like  $\mathbb{Z}_2$).

\vspace{0.2cm}

\textbf{On Swan's example:} In \cite{SW}, Swan gave examples to answer  in the affirmation the following question of Murthy and Wiegand:
``Does there exist a commutative ring $R$ and a finitely generated projective module $L$ over $R$ of rank $1$ such that $L\oplus L^{-1}$ is stably free but not free?


To understand one of the example given by Swan, consider the pull back diagram 
\[\begin{tikzcd}
B \arrow{r}{} \arrow[swap]{d}{} & \mathbb{R}[X, Y] \arrow{d}{\nu} \\
\frac{\mathbb{R}[X, Y]}{(X^2+Y^2-1)} \arrow{r}{\mu} & \frac{\mathbb{R}[X, Y]}{(X^2+Y^2-1)}
\end{tikzcd}
\]
where $\nu$ is the natural quotient map and $\mu: \frac{\mathbb{R}[X, Y]}{(X^2+Y^2-1)} \to \frac{\mathbb{R}[X, Y]}{(X^2+Y^2-1)}$ is the homomorphism induced by the map $S^1 \to S^1$ of degree $n$ given by $z \to z^n$, where $n> 2$. By Remark \ref{obs}, any stably free module of rank 2 over $B$ is free if the associated co-cycle in $SL_2(\frac{\mathbb{R}[X, Y]}{(X^2+Y^2-1)})$ splits.

Let $\tau = \begin{pmatrix}
x & y\\
-y & x
\end{pmatrix}$, where $x , y $ denote the image of $X, Y$ in $\frac{\mathbb{R}[X, Y]}{(X^2+Y^2-1)}.$ Consider $\sigma = \tau^2.$ Then $\begin{pmatrix}
1&\star &\star\\ 0& \sigma &\\ 0& &
\end{pmatrix}$ is stably elementary. Since $\sigma$ does not splits for $n>2$, the corresponding stably free module of rank 2 over $B$ is not free. Thus, we have a non-trivial elements of $\Gamma(\frac{\mathbb{R}[X, Y]}{(X^2+Y^2-1)})$ provides a non free stably free module of rank $2$ over $B$.

\end{document}